\documentclass[a4,12pt]{article}
\usepackage{amssymb,amsmath,epsfig,amscd,eucal,psfrag,amsthm,enumerate,tikz-cd}
\usepackage[T1]{fontenc}
\usepackage[latin9]{inputenc}
\usepackage{graphicx,color}

\pdfoutput=1

\newtheorem{theorem}{Theorem}

\newtheorem{corollary}[theorem]{Corollary}

\newtheorem{question}[theorem]{Question}

\makeatletter
\newtheorem*{rep@theorem}{\rep@title}
\newcommand{\newreptheorem}[2]{%
\newenvironment{rep#1}[1]{%
 \def\rep@title{#2 \ref{##1}}%
 \begin{rep@theorem}}%
 {\end{rep@theorem}}}
\makeatother

\newreptheorem{theorem}{Theorem}
\newreptheorem{cor}{Corollary}

\theoremstyle{definition}

\theoremstyle{remark}
\newtheorem{remark}[theorem]{Remark}

\newcommand{\Z}{\mathbb{Z}}

\usepackage{hyperref}

\input xy
\xyoption{all}

\title{On the profinite rigidity of surface groups and surface words}
\author{Henry Wilton}

\newcommand{\Addresses}{{
  \bigskip
  \footnotesize

  \textsc{DPMMS, Centre for Mathematical Sciences, Wilberforce Road, Cambridge, CB3 0WB, UK}\par\nopagebreak
  \textit{E-mail address:} \texttt{h.wilton@maths.cam.ac.uk}

}}

\begin{document}

\maketitle

\begin{abstract}
Surface groups are determined among limit groups by their profinite completions.  As a corollary, the set of surface words in a free group is closed in the profinite topology.
\end{abstract}

Profinite rigidity is the study of the extent to which a group $G$ is determined by its set of finite quotients, or, in more sophisticated terminology, by its profinite completion $\widehat{G}$.  Since $\widehat{G}$ only depends on the image of the natural map $G\to\widehat{G}$, we should assume that $G$ is residually finite, which means that the map is injective.  Remeslennikov asked one of the most provocative questions in this area \cite[Question 15]{noskov_infinite_1979}.

\begin{question}[Remeslennikov]\label{qu: Remeslennikov}
Let $F$ be a free (non-abelian) group, and let $G$ be finitely generated and residually finite. If $\widehat{G}\cong\widehat{F}$, does it follow that $G\cong F$?
\end{question}

In terminology that has recently become popular, Remeslennikov's question asks if free groups are \emph{absolutely profinitely rigid}.   Equally good questions can be formulated by replacing the free group $F$ by other groups of interest, such as surface groups.

Many finitely generated groups are known not to be absolutely profinitely rigid; perhaps the easiest examples are a pair of meta-cyclic groups exhibited by Baumslag \cite{baumslag_residually_1974}. There are some easy examples of absolutely profinitely rigid groups -- for instance, finite groups and finitely generated abelian groups -- but beyond these and their generalisations no finitely generated examples of absolutely profinitely rigid groups were known until the recent work of Bridson, McReynolds, Reid and Spitler \cite{bridson_absolute_2020,bridson_profinite_2020}, who showed that certain Kleinian groups, and even certain triangle groups, are absolutely profinitely rigid.  However, their techniques rely on delicate arithmetic information, and an answer to Question \ref{qu: Remeslennikov} remains a distant prospect, as does its analogue for surface groups.

One way to make progress is to place more restrictive hypotheses on $G$. For instance, Bridson, Conder and Reid showed that Fuchsian groups are profinitely rigid among lattices in Lie groups \cite{bridson_determining_2016}.  We are concerned with a different restriction.

A group $G$ is said to be \emph{fully residually free} if every finite subset $S\subseteq G$ can be mapped injectively by a homomorphism to a free group $F$.  The finitely generated fully residually free groups, called \emph{limit groups} by Sela, form a rich class of groups that are often difficult to distinguish from free groups, and that include all surface groups except for the three non-orientable surfaces with $\chi\geq -1$ \cite{baumslag_residually_1967}.  They came to prominence because of the central role that they play in Sela's solution to Tarski's problems on the elementary theory of free groups (see \cite{sela_diophantine_2001} \emph{et seq.}; see also the work of Kharlampovich--Miasnikov \cite{kharlampovich_irreducible_1998} \emph{et seq.}).

As a consequence of the existence of surface subgroups in non-free limit groups, free groups are profinitely rigid among limit groups \cite[Corollary D]{wilton_essential_2018}.  The main theorem of this note extends that result to surface groups.

\begin{theorem}\label{thm: Main intro thm}
Let $G$ be a limit group, and suppose that $\widehat{G}\cong\widehat{\pi_1\Sigma}$, where $\Sigma$ is a closed surface. Then $G\cong\pi_1\Sigma$.
\end{theorem}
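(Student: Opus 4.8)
The overall strategy is to mine $\widehat{G}\cong\widehat{\pi_1\Sigma}$ for three kinds of information — the homology of $G$ and of its finite-index subgroups, the (profinite) cohomological dimension, and, decisively, the fact that $\widehat{\pi_1\Sigma}$ is a profinite Poincar\'e duality group of dimension $2$ — and then to combine these with the structure theory of limit groups. Throughout one uses that limit groups, hence their finite-index subgroups, are good in the sense of Serre, so that $H^*(\widehat{G};M)\cong H^*(G;M)$ for finite $M$, together with standard facts: limit groups are torsion-free, coherent, of type $F$, and a limit group is free if and only if it has cohomological dimension at most $1$. If $\chi(\Sigma)\geq 0$ the statement is elementary: $\Sigma=S^2$ gives $\widehat{G}=1$ so $G=1$; $\Sigma=\mathbb{RP}^2$ is impossible as $G$ is infinite and torsion-free; and if $\Sigma$ is the torus or the Klein bottle then a finite-index subgroup of $G$ has profinite completion $\widehat{\Z}^2$, hence is $\Z^2$ (a residually finite group with abelian profinite completion is abelian), so $G$ is virtually $\Z^2$ and, being a torsion-free limit group, is $\Z^2\cong\pi_1(\text{torus})$, ruling out the Klein bottle. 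So assume $\chi(\Sigma)<0$. Since the profinite completion of a finitely generated abelian group determines it, $H_1(G;\Z)\cong H_1(\Sigma;\Z)$, which already pins down $\Sigma$ up to homeomorphism. By goodness the $\F_p$- and $\Q_p$-cohomology of $G$ agrees with the continuous cohomology of $\widehat{G}$, so $\mathrm{cd}(G)=2$ — in particular $G$ is not free — and $\chi(G)=1-b_1(G)+b_2(G)=\chi(\Sigma)$.

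Next one shows that $G$ is one-ended and hyperbolic. A non-trivial profinite free product $A\amalg B$ cannot be a profinite $\mathrm{PD}_2$ group: both factors have infinite index, so (by the profinite analogue of Strebel's theorem for closed subgroups of profinite duality groups) have cohomological dimension at most $1$, whence $H^2(A\amalg B;-)=H^2(A;-)\oplus H^2(B;-)$ vanishes identically, contradicting duality. As $\widehat{A*B}\cong\widehat{A}\amalg\widehat{B}$ with non-trivial factors when $A,B$ are non-trivial, $G$ is freely indecomposable; being infinite and not $\Z$, it is one-ended. If $\Z^2\leq G$ then its closure in $\widehat{G}$ is $\widehat{\Z}^2$ — maximal abelian subgroups of limit groups are closed and carry the full profinite topology — a closed subgroup of cohomological dimension $2$ in a profinite $\mathrm{PD}_2$ group, hence of finite index by profinite Strebel; then $G$ would be virtually $\Z^2$, contradicting $\chi(\Sigma)<0$. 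So $G$ is a one-ended hyperbolic limit group.

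The core step is to upgrade this to: $G$ is a closed surface group. A comparison of discrete and profinite cohomology for the good group $G$ of type $\mathrm{FP}_\infty$ — completing a finite free resolution of $\F_p$ over $\F_pG$ and using that this computes the continuous cohomology of $\widehat{G}$ with coefficients in $\F_p[[\widehat{G}]]$ — shows that $G$ is itself an $\F_p$-Poincar\'e duality group of dimension $2$ for every prime $p$. Now pass to the canonical cyclic JSJ decomposition of the one-ended hyperbolic group $G$, whose vertex groups are quadratically hanging (fundamental groups of compact surfaces with boundary) or rigid. A rigid vertex group $R$ is quasiconvex of infinite index, so $\widehat{R}$ is a closed subgroup of infinite index in $\widehat{G}$; profinite Strebel gives $\mathrm{cd}_p(\widehat{R})\leq 1$ for all $p$, and goodness (with $R$ of type $\mathrm{FP}_\infty$) gives $\mathrm{cd}(R)\leq 1$, so $R$ is free by Stallings--Swan. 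Thus every vertex group of the JSJ is free and $G$ is the fundamental group of a graph of free groups with cyclic edge groups carried by boundary classes of the surface vertices. The remaining, and hardest, point is that the $\mathrm{PD}_2$ property — not merely its consequence $\mathrm{cd}=2$ — forces this graph of groups to be dual to a \emph{closed} surface: one feeds the duality of $\widehat{G}$ into the Mayer--Vietoris spectral sequence computing $H^2(G;\Z G)$ from the graph of groups to rule out genuinely rigid free vertices and to see that the edge maps pair up boundary curves into a closed surface $\Sigma'$ with $G\cong\pi_1\Sigma'$. Finally $H_1(G;\Z)\cong H_1(\Sigma;\Z)$ identifies $\Sigma'$ with $\Sigma$.

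The main obstacle is exactly this last step: inside the cyclic JSJ of the limit group $G$, ruling out every configuration not coming from a closed surface, using the genuine profinite $\mathrm{PD}_2$ property of $\widehat{G}$ and not only its cohomological-dimension shadow. This is delicate — cohomological dimension is blind to a rigid \emph{free} vertex, and purely homological bookkeeping is even consistent with exotic torsion-free dualizing modules (a non-split extension of $\Q$ by $\Z$ has the same reductions modulo every prime as $\Z$), so that $\F_p$-duality for all $p$ is not by itself enough to force $H^2(G;\Z G)=\Z$. Its resolution requires the interaction of (i) goodness of limit groups, (ii) profinite Bass--Serre theory, controlling how the JSJ of $G$ is reflected in the profinite tree acted on by $\widehat{G}$, and (iii) the Poincar\'e-duality structure of $\widehat{\pi_1\Sigma}$ fed into the graph-of-groups computation to pin the dualizing module to $\Z$; the non-orientable case and the prime $2$, where the orientation character intervenes, together with the systematic passage to finite-index subgroups (all of which are again limit groups with constrained profinite completions), add further technical overhead.
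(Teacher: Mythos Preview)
Your outline diverges substantially from the paper's argument, and the gap you yourself flag is genuine: you never actually carry out the step of showing that a one-ended hyperbolic limit group whose profinite completion is profinite $\mathrm{PD}_2$ has a cyclic JSJ that assembles into a closed surface. Listing the ingredients --- goodness, profinite Bass--Serre theory, duality fed into Mayer--Vietoris --- is not the same as assembling them, and your own caveat that $\F_p$-duality for all $p$ does not force $H^2(G;\Z G)\cong\Z$ shows the obstruction is real rather than merely technical. Even the preliminary claim that rigid vertex groups are free needs more than you give: passing from $\mathrm{cd}_p(\widehat{R})\leq 1$ for all $p$ (via goodness) to the discrete statement $\mathrm{cd}(R)\leq 1$ is not automatic.

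The paper avoids the JSJ entirely and is much shorter. Since $G$ is not free, the author's own surface-subgroup theorem for limit groups \cite{wilton_essential_2018} produces a closed surface subgroup $\pi_1 S\leq G$; Hall's theorem for limit groups \cite{wilton_halls_2008} then gives a finite-index $K\leq G$ retracting onto $\pi_1 S$. If this retraction had non-trivial kernel it would contain an infinite cyclic subgroup, so its profinite completion would exhibit $\widehat{\pi_1 S}$ as a closed subgroup of infinite index in the profinite surface group $\widehat{K}$; Serre's observation on infinite-index closed subgroups of profinite $\mathrm{PD}_n$ groups would then force $H^2(\widehat{\pi_1 S};\Z/2)=0$, contradicting goodness of $\pi_1 S$. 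Hence $K$ is itself a surface group, and Nielsen realisation plus torsion-freeness makes $G$ one too. The conceptual difference is that the paper \emph{finds} a surface subgroup first and then uses the profinite $\mathrm{PD}_2$ property once to show it has finite index, rather than trying to synthesise a surface out of the JSJ; this trades your open-ended graph-of-groups analysis for a single clean cohomological contradiction.
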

\begin{proof}
Suppose that $G$ is a limit group with $\widehat{G}\cong\widehat{\pi_1\Sigma}$. By \cite[Corollary D]{wilton_essential_2018}, $G$ is not free, so $G$ contains a non-trivial surface subgroup $\pi_1S$ by \cite[Corollary C]{wilton_essential_2018}.   

By \cite{wilton_halls_2008}, there is a retraction
\[
r:K\to \pi_1S\,,
\]
where $K$ is a subgroup of finite index in $G$. Unless $K$ is a surface group, the kernel of $r$ contains an infinite cyclic subgroup $\langle k\rangle$, since limit groups are torsion-free.  Passing to profinite completions, $r$ induces a continuous retraction
\[
\hat{r}:\widehat{K}\to \widehat{\pi_1S}
\]
which contains the infinite pro-cyclic subgroup $\overline{\langle k\rangle}\cong\widehat{\Z}$ in its kernel.  In particular, the supernatural number $2^\infty$ divides the index of $\widehat{\pi_1S}$ in $\widehat{K}$.

This implies that $\widehat{K}$ is not a profinite surface group. Indeed, if it were, then $\widehat{K}$ would be a profinite $PD_2$ group, so by an observation of Serre (\cite[Exercise 5(b) on p.\ 44]{serre_galois_1997}, and  also \cite[Lemma 1.6]{wilton_profinite_2019}), $\widehat{\pi_1S}$ would have trivial second continuous cohomology.   But surface groups are good \cite[Proposition 3.7]{grunewald_cohomological_2008}, so
\[
H^2(\widehat{\pi_1S},\Z/2)\cong H^2(\pi_1S,\Z/2)\cong\Z/2\,, 
\]
which is a contradiction.

Hence $K$ is a surface group, so $G$ is also a surface group by Nielsen realisation, since it is torsion-free. But surface groups are distinguished from each other by their profinite completions, so $G\cong\pi_1\Sigma$.
\end{proof}

Similar questions can also be asked fruitfully about subsets of a finitely generated free group $F$.  The \emph{profinite topology} on a group is generated by the cosets of the subgroups of finite index, and a subset is called \emph{separable} if it is closed in the profinite topology.  The following question is then the analogue of absolute profinite rigidity for subsets of free groups (cf.\ \cite[pp. 93--94]{puder_measure_2015}).

\begin{question}\label{qu: Separability of automorphism orbits}
Let $w\in F$. Is the automorphism orbit $\mathrm{Aut}(F).w$ separable in $F$?
\end{question}

This question appears to be very difficult for any $w\neq 1$.  The case where $w$ is a \emph{primitive} element of $F$ -- i.e.\ an element of a free basis -- was answered affirmatively by Parzanchevski and Puder \cite{puder_measure_2015}, who proved that primitive elements are distinguished by the push-forward measures they induce under random maps to finite symmetric groups.   A different proof was later given by the author \cite[Corollary E]{wilton_essential_2018}, as a corollary of the profinite rigidity of free groups among limit groups.

Surface words are often regarded as the `next simplest' class of words after primitive words.  For $n$ even, the \emph{orientable} surface words are those in the automorphism orbit of the product of commutators $[a_1,a_2]\ldots [a_{n-1},a_n]$. For any $n$, the \emph{non-orientable} surface words are those in the orbit of the product of squares $a_1^2\ldots a_n^2$.   The terminology stems from the fact that, if $n$ is a surface word, then the presentation
\[
F=\langle a_1,\ldots, a_n,b\mid b=w\rangle
\]
arises when one considers $F$ as the fundamental group of a compact surface with a single boundary component $b$.

Magee and Puder studied surface words in \cite{magee_surface_2019}, and showed that they are characterised by the measure that they induce under random homomorphisms to \emph{compact} Lie groups. However, since the Lie groups they consider are sometimes infinite, this does not imply that the set of surface words is separable.   Hanany, Meiri and Puder proved that the automorphism orbit of  the commutator $[a_1,a_2]$ is separable \cite{hanany_orbits_2020}, again by studying the push-forward measure associated to a random map to a symmetric group.

It is an easy corollary of Theorem \ref{thm: Main intro thm} that any automorphism orbit of surface words is separable.

\begin{corollary}\label{cor: Surface words}
The sets of orientable and non-orientable surface words in a free group $F$ are separable.
\end{corollary}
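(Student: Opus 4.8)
The plan is to recognise the automorphism orbit of a surface word through the isomorphism type of a one-relator quotient of $F$, and to exploit that ``admitting a fixed finite group as a quotient'' is an open and closed condition on the relator. Write $F=F(a_1,\dots,a_N)$. The orientable surface words are the union, over $0\le g\le N/2$, of the orbits of $w_g=[a_1,a_2]\cdots[a_{2g-1},a_{2g}]$, and the non-orientable ones the union, over $1\le k\le N$, of the orbits of $a_1^2\cdots a_k^2$; as these are finite unions it suffices to prove that a single orbit $O=\mathrm{Aut}(F).w_0$ is closed. Let $\Sigma$ be the closed surface attached to $w_0$, of rank $n\le N$, so that $\Gamma:=F/\langle\langle w_0\rangle\rangle\cong\pi_1\Sigma\ast F_{N-n}$. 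The first ingredient is the classical characterisation: $w\in O$ if and only if $F/\langle\langle w\rangle\rangle\cong\Gamma$. The forward direction is immediate; for the converse, an isomorphism $F/\langle\langle w\rangle\rangle\cong\Gamma$ yields a generating tuple of $\Gamma$ of minimal size $N$, which by the Grushko decomposition and Zieschang's theorem (see also Louder) that minimal generating tuples of a surface group are all Nielsen equivalent must be Nielsen equivalent to the standard generating tuple of $\Gamma$; realising this equivalence by an automorphism $\alpha$ of $F$ shows $\alpha(w)\in\langle\langle w_0\rangle\rangle$, and then the Hopf property of $\Gamma$ forces $\langle\langle\alpha(w)\rangle\rangle=\langle\langle w_0\rangle\rangle$, so by Magnus's theorem on normal closures of single elements $\alpha(w)$ is conjugate to $w_0^{\pm1}$ and $w\in O$.

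Now consider $X:=\{\,w\in F:\widehat{F/\langle\langle w\rangle\rangle}\cong\widehat{\Gamma}\,\}$. For a finite group $Q$, the set of $w$ with $Q$ a quotient of $F/\langle\langle w\rangle\rangle$ is $\bigcup_q\ker q$, the union over the finitely many epimorphisms $q\colon F\twoheadrightarrow Q$ --- a finite union of finite-index subgroups, hence open and closed, and so is its complement. Since $F/\langle\langle w\rangle\rangle$ and $\Gamma$ are finitely generated, $\widehat{F/\langle\langle w\rangle\rangle}\cong\widehat{\Gamma}$ is equivalent to their having the same finite quotients; thus $X$ is an intersection of open and closed sets, hence closed. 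As $O\subseteq X$ by the first paragraph, we get $\overline{O}\subseteq X$, and it remains to show $X\subseteq O$.

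So let $w\in X$. I claim that $F/\langle\langle w\rangle\rangle$ is a limit group. Granting the claim, Theorem \ref{thm: Main intro thm} --- extended to the groups $\pi_1\Sigma\ast F_k$ by using that the profinite completion of a limit group detects its decomposition as a free product, together with the profinite rigidity of free groups among limit groups \cite[Corollary D]{wilton_essential_2018} --- shows $F/\langle\langle w\rangle\rangle\cong\Gamma$, and then the characterisation of the first paragraph gives $w\in O$, as required. This would complete the argument, modulo the limit-group claim and the exceptional cases noted below.

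The \emph{main obstacle} is therefore the claim that $F/\langle\langle w\rangle\rangle$ is a limit group when $\widehat{F/\langle\langle w\rangle\rangle}\cong\widehat{\Gamma}$. This group is a one-relator group; it is torsion-free, because $\widehat{\Gamma}$ is torsion-free (surface groups are good and two-dimensional, so their profinite completions have finite cohomological dimension at each prime); and the class of $w$ in $H_1(F)$ is constrained by the induced isomorphism of first homology. What one wants is that a torsion-free one-relator group whose profinite completion is that of a (virtually) surface group must itself be a surface group --- proved, for instance, via a classification of residually free one-relator groups, or by identifying it directly as a two-dimensional Poincaré duality group --- and making this rigorous is the heart of the matter. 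Separately, the surface types whose fundamental groups are not limit groups, namely $\mathbb{RP}^2$, the Klein bottle and $N_3$, lie outside the reach of Theorem \ref{thm: Main intro thm} and require their own treatment; the orbit of $[a_1,a_2]$ (the orientable torus) is already covered by \cite{hanany_orbits_2020}, and the $\mathbb{RP}^2$ case amounts to the separability of squares of primitive elements, which can be deduced from \cite[Corollary E]{wilton_essential_2018}.
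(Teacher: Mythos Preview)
You have correctly located the gap yourself: there is no available argument showing that the one-relator group $F/\langle\langle w\rangle\rangle$ is a limit group merely because its profinite completion agrees with that of $\pi_1\Sigma$ (or of $\pi_1\Sigma*F_{N-n}$). The heuristics you offer---torsion-freeness via goodness, constraints on $H_1$, or a hoped-for classification of residually free one-relator groups---do not close it, since each either presupposes residual freeness or falls well short of establishing it. Without this step Theorem~\ref{thm: Main intro thm} cannot be invoked, and the argument does not go through.

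The paper avoids the problem by working not with the one-relator quotient but with the \emph{double} $D(w)=F*_{\langle w\rangle}F$. One has that $w$ is a surface word if and only if $D(w)\cong\pi_1\Sigma$ for a closed hyperbolic surface $\Sigma$, and $w$ lies in the closure of the surface words if and only if $\widehat{D(w)}$ is a profinite hyperbolic surface group. The decisive advantage is that, by a classical theorem of Baumslag, $D(w)$ is automatically a limit group whenever $w$ is not a proper power, so Theorem~\ref{thm: Main intro thm} applies directly with no missing hypothesis. The proper-power case is disposed of separately: if $w=u^n$ with $n>1$ then $D(w)$ contains a separable copy of $\mathbb{Z}^2$, forcing $\widehat{\mathbb{Z}}^2\subseteq\widehat{D(w)}$, which is impossible inside a profinite hyperbolic surface group. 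This route also renders the Zieschang--Louder--Magnus characterisation, the extension of Theorem~\ref{thm: Main intro thm} to free products, and the ad hoc treatment of the small non-orientable cases unnecessary.
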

\begin{proof}
A word $w\in F$  is a surface word if and only if the double
\[
D(w)=F*_{\langle w\rangle} F
\]
is isomorphic to  $\pi_1\Sigma$, where $\Sigma$ is a closed hyperbolic surface. On the other hand, $w$ is in the closure of the set of surface words if and only if $\widehat{D(w)}$ is a profinite hyperbolic surface group. Therefore, it suffices to prove that, if $\widehat{D(w)}$ is a profinite hyperbolic surface group, then $D(w)$ is a hyperbolic surface group.

If $w$ were a proper power, say $w=u^n$ for $n>1$, then $D(w)$ would contain a copy of the torus-link group $\Z*_{n\Z}\Z$, and thence a copy of $\mathbb{Z}^2$. Every finite-index subgroup of this $\Z^2$ would be separable in $D(w)$ \cite[Theorem 5.1]{wise_subgroup_2000}, so $\widehat{D(w)}$ would contain a copy of $\widehat{\Z}^2$. But profinite hyperbolic surface groups do not contain $\widehat{\Z}^2$ \cite[Theorem D]{wilton_distinguishing_2017}, so $w$ is not a proper power.\footnote{The argument of this paragraph should also have been included in the proof of \cite[Corollary E]{wilton_essential_2018}.}

Therefore, $w$ is not a proper power, so $D(w)$ is a limit group \cite{baumslag_generalised_1962}, and hence a surface group  by Theorem \ref{thm: Main intro thm}, so $w$ is a surface word. Finally, (non)-orientability is determined by whether or not the corresponding surface is orientable. Since this is detected by the continuous cohomology group
\[
H^2(\widehat{D(w)},\Z/3)\cong H^2(D(w),\Z/3)
\] 
it follows that the subsets of orientable and non-orientable surface words are separable.
\end{proof}

\begin{remark}\label{rem: Tuples}
There is also a notion of a \emph{surface tuple} of words, arising from surfaces with more than one boundary component. The methods of this paper also show that automorphism orbits of surface tuples are separable, but the details are left to the reader.
\end{remark}

\begin{remark}\label{rem: Powers}
By \cite[Theorem 1.7]{hanany_orbits_2020}, Corollary \ref{cor: Surface words} also extends to powers of surface words.
\end{remark}

\subsection*{Acknowledgements}

I am grateful to Doron Puder for pointing out \cite[Theorem 1.7]{hanany_orbits_2020}.

\bibliographystyle{plain}

\Addresses

\end{document}